\newtheorem{theorem}{Theorem}[section]
\newtheorem{lemma}[theorem]{Lemma}
\newtheorem{remark}[theorem]{Remark}
\newcommand{\bfS}[1]{\mathbf{S}^{#1}} % Sphere notation
\newcommand{\bfR}[1]{\mathbf{R}^{#1}} % Euclidean space notation
\newcommand{\R}{\mathbb R}
\title{Navigating the Space of Compact CMC Hypersurfaces in Spheres, Part II}
\author{Oscar Perdomo}
\address{Central Connecticut State University}
\email{perdomoosm@ccsu.edu}
\date{\today}
\begin{document}

\maketitle

\begin{abstract}
In \(\bfR{3}\), let \(
  \mathcal{M}
  =\bigl\{\lvert x-(2m,0,0)\rvert=1 : m\in\mathbb{Z}\bigr\}
\)
be the infinite union of unit spheres whose centers lie at even integers on the \(x\)-axis; every pair of consecutive spheres touches at \((2m+1,0,0)\).  Desingularizing these point contacts yields Delaunay’s classical constant-mean-curvature surfaces (unduloids and nodoids).

Motivated by this picture, we construct an analogue in the unit sphere \(\bfS{4}\).  We begin with the piecewise-smooth hypersurface \(M\subset\bfS{4}\) obtained by gluing two carefully chosen totally umbilical 3-spheres to two specific Clifford hypersurfaces, all four components sharing the same constant mean curvature and meeting along four disjoint circles.

We provide numerical evidence that these circles can be desingularized: there exists a smooth one-parameter family 
\(\{\Sigma_{b}\}_{b\in(0,B)}\subset\bfS{4}\)
of constant-mean-curvature hypersurfaces such that \(\Sigma_{b}\to M\) as \(b\to0\).  The mean curvature \(H(b)\) varies smoothly along the family and vanishes at a single non-embedded minimal member.  Moreover, there is a threshold \(B_{1}\in(0,B)\) such that when \(b<B_{1}\) the hypersurface \(\Sigma_{b}\) is embedded (``unduloid type''), whereas for \(b\ge B_{1}\) it is non-embedded (``nodoid type'').  As \(b\nearrow B\), the hypersurfaces converge to a minimal hypersurface with two singular points.
\end{abstract}

`
\section{Introduction}

Constant-mean-curvature (CMC) hypersurfaces of revolution in \(\bfS{n+1}\) are well understood; see, for instance, \cite{PRe}.   By contrast, 
the landscape of CMC hypersurfaces 
arising from the particular case of the more general  \emph{generalized rotational immersions}
\cite{P},
\begin{equation}\label{gh}
\varphi: \bfS{\ell}\times\bfS{m}\times\bfS{1} \longrightarrow \bfS{\ell+m+2},
\quad
\varphi(y,z,t)=(f_{2}(t)y,\,f_{3}(t)z,\,f_{1}(t)),
\end{equation}
is far from fully charted.  In \cite{CS}, Carlotto and Schulz treat the symmetric case \(m=\ell\) and produce infinitely many minimal (\(H=0\)) examples, exactly one of which is embedded for each \(\ell\).  The work in \cite{HW} extends that embedded minimal example in one direction to a \emph{one-parameter family} of CMC hypersurfaces with positive mean curvature (\(H>0\)), and \cite{LW} constructs complementary examples with \(H<0\), roughly half of which continue the minimal examples of \cite{CS} in the opposite direction considered in \cite{HW}.  A numerical study of the full deformation in \emph{both} directions—covering not only the symmetric case but also the asymmetric regime \(m\neq\ell\)—appears in \cite{Pa}.  Finally, \cite{P} revisits the embedded minimal examples of \cite{CS}, extends them to \(m\neq\ell\), and gives an explicit procedure for computing the spectra of both the Laplacian and the stability (Jacobi) operator.

To the author’s surprise, the embedded CMC hypersurfaces of the form \eqref{gh} described in \cite{Pa,HW,LW} are \emph{not} exhaustive.  In this paper, we present numerical evidence of a new one-parameter family containing both embedded and non-embedded examples.  Just as the classical Delaunay surfaces in \(\bfR{3}\) can be viewed as emerging from a chain of mutually tangent unit spheres, our family in \(\bfS{4}\) emerges from a carefully chosen piecewise-CMC hypersurface composed of tangent Clifford hypersurfaces and totally umbilical 3-spheres, all sharing the same mean curvature.  No analogous Euclidean construction is possible, since in \(\bfR{3}\) tangent spheres and cylinders do \emph{not} have equal mean curvature.

Section~\ref{sec:cmcsing} describes the two singular limit hypersurfaces of our family.  The first limit is a piecewise-CMC hypersurface in \(\bfS{4}\) whose singular set is a union of four circles (a co-dimension-two submanifold); this parallels the Delaunay construction in \(\bfR{3}\), where the singular set is a discrete set of points.  The second limit is a minimal hypersurface in \(\bfS{4}\) with singularities at exactly two points.

Section~\ref{sec:ode} presents the governing differential equation and explains the numerical continuation method used to generate our family, including how the singular limit hypersurfaces correspond to solutions with singularities of the ODE.  Finally, Section~\ref{sec:S4} details the resulting one-parameter family in \(\bfS{4}\) and analyzes its geometric properties.

\section{The singular CMC generating hypersurfaces}\label{sec:cmcsing}

This section constructs the two singular limit hypersurfaces of our one-parameter family of CMC hypersurfaces in \(\bfS{4}\).

\subsection{Piecewise–CMC (totally umbilical/Clifford) limit hypersurface}
In this subsection, we describe the first limit configuration, which is more intricate.  We construct in detail the piecewise-CMC hypersurface from which our family of CMC immersions emanates.  Although its components are Clifford and totally umbilical hypersurfaces, we also include the generalized-rotational examples to illustrate how they fit together.  We consider the following four immersions into the unit sphere \(\bfS{4}\subset\bfR{5}\):

\begin{align*}
M_{1} &= \bigl\{\,x=(x_{1},\dots,x_{5})\in\bfR{5} : x_{1}^{2}+\cdots+x_{5}^{2}=1,\;x_{5}=c\,\bigr\},\\
      &\quad c = \pm\sqrt{1-r_{1}^{2}},\quad r_{1}\in(0,1),\\[6pt]
M_{2} &= \bigl\{\,(r_{2}\cos\theta_{1},\,r_{2}\sin\theta_{1},\,\sqrt{1-r_{2}^{2}}\;u)\;:\;\theta_{1}\in\R,\;u\in\bfS{2}\bigr\},\\
      &\quad r_{2}\in(0,1)\text{ constant},\\[6pt]
M_{3} &= \bigl\{\;(\sqrt{1-r_{3}^{2}}\,u_{1},\,\sqrt{1-r_{3}^{2}}\,u_{2},\,r_{3}\cos\theta_{2},\,r_{3}\sin\theta_{2},\,\sqrt{1-r_{3}^{2}}\,u_{3}):\\
      &\quad \theta_{2}\in\R,\;(u_{1},u_{2},u_{3})\in\bfS{2}\bigr\},\\
      &\quad r_{3}\in(0,1)\text{ constant},\\[6pt]
M_{4} &= \bigl\{\,(f_{2}(t)\cos\theta_{1},\,f_{2}(t)\sin\theta_{1},\,f_{3}(t)\cos\theta_{2},\,f_{3}(t)\sin\theta_{2},\,f_{1}(t)):\\
      &\quad \theta_{1},\theta_{2}\in\R,\;f_{1}(t)^{2}+f_{2}(t)^{2}+f_{3}(t)^{2}=1\bigr\}.
\end{align*}

Here, \(M_{1}\) is a totally umbilical 3–sphere in \(\bfS{4}\) with mean curvature
\[
H_{1} \;=\; \frac{\sqrt{1 - r_{1}^{2}}}{r_{1}}.
\]
For \(j=2,3\), each \(M_{j}\) is isometric to the Clifford hypersurface
\[
\bfS{2}\bigl(\sqrt{1 - r_{j}^{2}}\bigr)\times\bfS{1}(r_{j}),
\]
with mean curvature
\[
H_{j} \;=\; \frac{1}{3}\Bigl(\frac{2r_{j}}{\sqrt{1 - r_{j}^{2}}}
  \;-\;\frac{\sqrt{1 - r_{j}^{2}}}{r_{j}}\Bigr).
\]

When the “profile curve”
\[
\alpha(t) \;=\;(f_{3}(t),\,f_{2}(t),\,f_{1}(t))
\]
is a smooth periodic curve in \(\bfS{2}\) with \(f_{2}(t)>0\) and \(f_{3}(t)>0\) for all \(t\), the immersion \(M_{4}\) defines
\(\bfS{1}\times\bfS{1}\times\bfS{1}\to\bfS{4}\).  At special choices one may drop periodicity and allow \(f_{2}(t)\) or \(f_{3}(t)\) to vanish at isolated values of \(t\) while still obtaining an immersion.  For example:
\begin{itemize}
  \item If \(f_{1}(t)=\pm\sqrt{1 - r_{1}^{2}}\) is constant and \((f_{2}(t),f_{3}(t))\) trace the circle of radius \(r_{1}\), then \(M_{4}=M_{1}\).
  \item If \(f_{2}(t)=r_{2}\) and \((f_{1}(t),f_{3}(t))\) trace the circle of radius \(\sqrt{1 - r_{2}^{2}}\), then \(M_{4}=M_{2}\).
  \item If \(f_{3}(t)=r_{3}\), then \(M_{4}=M_{3}\).
\end{itemize}

Consider the intersection \(M_{1}\cap M_{2}\).  There are three cases:

\begin{enumerate}
  \item \(r_{2}>r_{1}\).  Then 
  \[
    \sqrt{1-r_{2}^{2}}<\sqrt{1-r_{1}^{2}},
    \quad\text{so}\quad
    M_{1}\cap M_{2}=\emptyset.
  \]

  \item \(r_{2}=r_{1}\).  A point of \(M_{2}\) lies in \(M_{1}\) exactly when \(u_{3}=1\) (i.e.\ \(u=(0,0,1)\)), and
  \[
    M_{1}\cap M_{2}
    =\{(r_{2}\cos\theta_{1},\,r_{2}\sin\theta_{1},\,0,\,0,\,\sqrt{1-r_{2}^{2}})\},
  \]
  which is a circle of radius \(r_{2}\).

  \item \(r_{2}<r_{1}\).  Define
  \[
    r_{12}
    =\frac{\sqrt{r_{1}^{2}-r_{2}^{2}}}{\sqrt{1-r_{2}^{2}}}.
  \]
  Then
  \[
    M_{1}\cap M_{2}
    =\{(r_{2}\cos\theta_{1},\,r_{2}\sin\theta_{1},\,r_{12}\cos\theta_{2},\,r_{12}\sin\theta_{2},\,\sqrt{1-r_{2}^{2}})\},
  \]
  which is a torus isometric to
  \[
    \bfS{1}(r_{2})\times\bfS{1}(r_{12}).
  \]
\end{enumerate}

\subsubsection{Determining the radii for the singular CMC generating hypersurfaces}

\[
  \frac{\sqrt{1 - r^{2}}}{r}
  =
  \frac{1}{3}\Bigl(\frac{2r}{\sqrt{1 - r^{2}}}
    - \frac{\sqrt{1 - r^{2}}}{r}\Bigr),
  \quad r>0,
\]
whose unique solution is
\[
  r^{2} = \frac{2}{3},
  \quad
  r = \sqrt{\frac{2}{3}}.
\]

With this choice of \(r\), define
\begin{equation}\label{mM}
  M \;=\; M_{1}^{+}\;\cup\; M_{1}^{-}\;\cup\; M_{2}\;\cup\; M_{3}
      \;\setminus\; (\gamma_{1}\cup\gamma_{2}\cup\gamma_{3}\cup\gamma_{4}),
\end{equation}
where
\begin{align*}
  M_{1}^{\pm} &= \bigl\{\,x\in\bfS{4} : x_{5}=\pm\sqrt{1 - r^{2}}\,\bigr\},\\
  M_{2}       &= \bigl\{\,x\in\bfS{4} : x_{1}^{2}+x_{2}^{2}=r^{2}\,\bigr\},\\
  M_{3}       &= \bigl\{\,x\in\bfS{4} : x_{3}^{2}+x_{4}^{2}=r^{2}\,\bigr\}.
\end{align*}

%%%
The intersection circles are
\[
\begin{aligned}
\gamma_{1}&=\{(r\cos\theta,\,r\sin\theta,\,0,\,0,\,\sqrt{1-r^{2}})\colon\theta\in\mathbb{R}\},\\
\gamma_{2}&=\{(0,\,0,\,r\cos\theta,\,r\sin\theta,\,\sqrt{1-r^{2}})\colon\theta\in\mathbb{R}\},\\
\gamma_{3}&=\{(r\cos\theta,\,r\sin\theta,\,0,\,0,\,-\sqrt{1-r^{2}})\colon\theta\in\mathbb{R}\},\\
\gamma_{4}&=\{(0,\,0,\,r\cos\theta,\,r\sin\theta,\,-\sqrt{1-r^{2}})\colon\theta\in\mathbb{R}\}.
\end{aligned}
\]

%%%%
%%

Each component \(M_{1}^{\pm}\), \(M_{2}\), and \(M_{3}\) has the same constant mean curvature
\[
  \frac{\sqrt{1 - r^{2}}}{r}
  = \frac{1}{3}\Bigl(\frac{2r}{\sqrt{1 - r^{2}}}
                     - \frac{\sqrt{1 - r^{2}}}{r}\Bigr)
  = \frac{1}{\sqrt{2}}.
\]
Hence \(M\) is a piecewise‐smooth CMC hypersurface.  Since \(2r^{2}=4/3>1\), the Clifford pieces satisfy 
\[
  M_{2}\cap M_{3}=\emptyset,
\]
and the remaining intersections are exactly the four circles:
\[
  M_{2}\cap M_{1}^{+}=\gamma_{1},\quad
  M_{3}\cap M_{1}^{+}=\gamma_{2},\quad
  M_{2}\cap M_{1}^{-}=\gamma_{3},\quad
  M_{3}\cap M_{1}^{-}=\gamma_{4}.
\]
Thus \(M\) is our singular hypersurface in \(\bfS{4}\).  Section~\ref{sec:S4} will show how the new family converges to \(M\).

\subsection{The second limit hypersurface}

In this subsection, we describe the second singular limit in the general \(\ell\)-parameter setting (for \(\bfS{4}\), \(\ell=m=1\)).  Numerical evidence shows that our one-parameter CMC family, which emanates from the piecewise-CMC hypersurface \(M\) of \eqref{mM}, terminates at the singular minimal hypersurface

\begin{eqnarray}\label{mMf}
M_{f}
=\bigl\{\,x\in\bfS{2\ell+1}:x_{1}^{2}+\cdots+x_{\ell+1}^{2}
 = x_{\ell+2}^{2}+\cdots+x_{2\ell+2}^{2}\bigr\},
\end{eqnarray}

which is smooth away from the two poles \(\pm(0,\dots,0,1)\) and may be viewed as a cylinder over the minimal Clifford hypersurface.  Moreover, the parametrization
\[
\phi(u,v,t)
=\Bigl(\tfrac{\cos t}{\sqrt{2}}\,\xi(u),\;\tfrac{\cos t}{\sqrt{2}}\,\xi(v),\;\sin t\Bigr),
\]
with \(\xi\colon W\subset\bfR{\ell}\to\bfS{\ell}\) a local chart on \(\bfS{\ell}\), exhibits \(M_{f}\) as a generalized-rotational hypersurface with two isolated singularities and permits explicit computation of its principal curvatures.

\section{The ODE}\label{sec:ode}

As expected, the constant–mean–curvature condition reduces to an ordinary differential equation.  As in \cite{P,Pa}, we study the curve
\[
\beta(t)=(f_{1}(t),f_{2}(t))\in D_{+}\subset\bfR{2},
\quad
D_{+}=\{(x,y):x^{2}+y^{2}<1,\;y>0\},
\]
rather than the full curve \((f_{1}(t),f_{2}(t),f_{3}(t))\in\bfS{2}\).  We assume \(\beta\) is parameterized by arc length, so there is an angle function \(\theta(t)\) with
\[
f_{1}'(t)=\cos\theta(t),
\quad
f_{2}'(t)=\sin\theta(t).
\]
Set
\[
g = f_{2}\cos\theta - f_{1}\sin\theta,\quad
h = \sqrt{1 - g^{2}},\quad
f_{3} = \sqrt{1 - f_{1}^{2} - f_{2}^{2}}.
\]
Then (see \cite{Pa}) the immersion \(\varphi\) of \eqref{gh} has constant mean curvature \(H\) if and only if
\[
\theta'
= \frac{h^{2}}{f_{3}^{2}f_{2}}
  \Bigl(\ell\cos\theta - n\,f_{2}g + nH\,f_{2}h\Bigr),
\quad
n=\ell+m+1.
\]

\begin{theorem}\label{thm:ode}
Under the hypotheses of \(\varphi\) in \eqref{gh}, the functions \((f_{1},f_{2},\theta)\) satisfy the autonomous system
\[
\begin{cases}
f_{1}' = \cos\theta,\\[4pt]
f_{2}' = \sin\theta,\\[4pt]
\theta'
= \displaystyle\frac{h^{2}}{f_{3}^{2}f_{2}}
  \bigl(\ell\cos\theta - n\,f_{2}g + nH\,f_{2}h\bigr),
\end{cases}
\]
where \(g=f_{2}\cos\theta - f_{1}\sin\theta\) and \(h=\sqrt{1-g^{2}}\).  This system is equivalent to \(\varphi\) having constant mean curvature \(H\).
\end{theorem}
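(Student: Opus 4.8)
The plan is to verify directly that the constant-mean-curvature PDE for the immersion $\varphi$ in \eqref{gh}, written with respect to the arc-length parametrization $\beta(t)=(f_1(t),f_2(t))$, is exactly the scalar equation $\theta' = \frac{h^2}{f_3^2 f_2}(\ell\cos\theta - n f_2 g + n H f_2 h)$ already recalled from \cite{Pa}. Since $f_1' = \cos\theta$ and $f_2' = \sin\theta$ are just the definition of the angle function $\theta$ for an arc-length curve, the content of the theorem is the third equation together with the claimed equivalence. So the first step is to set up a moving frame adapted to $\varphi$: tangent vectors $\partial_{\theta_1}\varphi$, $\partial_{\theta_2}\varphi$ spanning the two Hopf-type directions (here scaled by $f_2$ and $f_3$ respectively, or rather by the sphere factors), the tangent $\partial_t\varphi$, and a unit normal $N$ to $\varphi$ inside $\bfS{n+1}$. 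Because $\varphi$ has the product structure $(f_2 y, f_3 z, f_1)$ with $y\in\bfS{\ell}$, $z\in\bfS{m}$, and $f_1^2+f_2^2+f_3^2=1$, the first fundamental form is diagonal in these blocks, and the key is to compute the three relevant principal curvatures.

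Next I would compute the principal curvatures explicitly. Two of them come from the umbilic directions along $\bfS{\ell}$ and $\bfS{m}$: differentiating $\varphi$ in a sphere direction and projecting the second derivative onto $N$ gives curvatures of the form (something)$/f_2$ and (something)$/f_3$, where the numerators are the normal components of the profile data; in the notation of the excerpt these should organize into the quantities $g$ and $h$ (note $g = f_2\cos\theta - f_1\sin\theta$ is, up to sign, $\langle \beta, \beta'^{\perp}\rangle$-type expression and $h = \sqrt{1-g^2}$ is the complementary normal length, using $|\beta'|=1$). The third principal curvature is the geodesic-curvature term of the profile curve, which produces $\theta'$; this is where $h^2/(f_3^2 f_2)$ and the $\ell\cos\theta$ term enter. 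Summing the three and setting the mean curvature equal to the constant $H$ (with the convention $n = \ell+m+1$ and mean curvature $= \frac{1}{n}\tr(\text{shape operator})$, or whatever normalization \cite{Pa} uses) yields, after clearing denominators, precisely $\theta' = \frac{h^2}{f_3^2 f_2}(\ell\cos\theta - n f_2 g + n H f_2 h)$. For the converse, one runs the same computation backwards: a solution of the system defines $f_3 = \sqrt{1-f_1^2-f_2^2}$ and hence an immersion of the stated form wherever $f_2, f_3 > 0$, and the third ODE forces the mean curvature to equal $H$ pointwise.

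Honestly, most of this is bookkeeping, and since the third equation is explicitly attributed to \cite{Pa} in the lines immediately preceding the theorem statement, the cleanest route is simply to \emph{quote} that computation: the first two equations are the definition of $\theta$, the third is \cite[the cited formula]{Pa}, and the autonomy of the system is visible because the right-hand sides depend only on $(f_1,f_2,\theta)$ and not on $t$ — here one must note that $g$, $h$, and $f_3$ are all algebraic functions of $(f_1,f_2,\theta)$, so the system is genuinely autonomous. The equivalence with "$\varphi$ has constant mean curvature $H$" is then inherited verbatim from \cite{Pa}.

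The main obstacle I anticipate is purely notational rather than conceptual: reconciling the sign and normalization conventions for the unit normal and for $H$ between this paper and \cite{Pa,P}, and making sure the degenerate loci $f_2 = 0$ (where the $\bfS{\ell}$ factor collapses) and $f_3 = 0$ (where the $\bfS{m}$ factor collapses, and the right-hand side of the $\theta$-equation blows up) are handled honestly — these are exactly the points that will later correspond to the singular limit hypersurfaces $M$ and $M_f$ of Section~\ref{sec:cmcsing}, so the statement should be understood on the open region where $f_2, f_3 > 0$. A secondary subtlety is checking that $h = \sqrt{1-g^2}$ is well-defined, i.e. that $|g|\le 1$; this follows from $|\beta'| = 1$ and $|\beta| < 1$ on $D_+$, since $g$ is one orthonormal component of a unit-ish vector, so $g^2 \le f_1^2 + f_2^2 < 1$ in fact holds on $D_+$ — worth a one-line remark but not an obstruction.
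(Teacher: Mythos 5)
Your proposal is correct and matches the paper's treatment: the paper offers no independent proof of Theorem~\ref{thm:ode}, deriving the first two equations from the arc-length parametrization of $\beta$ and importing the third (the CMC condition) verbatim from \cite{Pa}, which is exactly the ``cleanest route'' you identify. Your supplementary observations --- that the system is autonomous because $g$, $h$, $f_3$ depend only on $(f_1,f_2,\theta)$, and that $|g|\le\sqrt{f_1^2+f_2^2}<1$ on $D_+$ so $h$ is well defined --- are correct and go slightly beyond what the paper records.
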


Let us continue finding out some explicit solutions of the system

\begin{lemma} \label{lemma1} If $f_2(t)=r_2$ is a solution of the ODE in Theorem \ref{thm:ode} then, $|H|=\frac{|nr_2^2-\ell|}{nr_1r_2}$ with $r_1=\sqrt{1-r_2^2}$.

\end{lemma}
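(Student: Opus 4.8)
The plan is to use the constancy of $f_2$ to pin down $\theta$, evaluate the auxiliary quantities $g$ and $h$ along the resulting solution, and then read off $H$ from the third equation of the autonomous system in Theorem~\ref{thm:ode}.

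First I would differentiate the relation $f_2(t)\equiv r_2$: from $f_2'=\sin\theta$ we get $\sin\theta(t)\equiv 0$, so $\theta$ is constant with $\theta\in\{0,\pi\}\pmod{2\pi}$, hence $\cos\theta=\varepsilon$ for some $\varepsilon=\pm1$, and $f_1'=\cos\theta=\varepsilon$ gives $f_1(t)=\varepsilon t+c$. Thus a genuine constant-$f_2$ solution exists precisely on the interval where $f_3=\sqrt{1-f_1^2-r_2^2}>0$, i.e.\ where $|f_1|<r_1=\sqrt{1-r_2^2}$; on that interval $f_3^2$, $f_2=r_2$, and (as computed next) $h$ are all nonzero.

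Next I would compute the auxiliary quantities along this solution. Since $\sin\theta=0$ and $\cos\theta=\varepsilon$, we have $g=f_2\cos\theta-f_1\sin\theta=\varepsilon r_2$, and therefore $h=\sqrt{1-g^2}=\sqrt{1-r_2^2}=r_1$. In particular the prefactor $h^2/(f_3^2 f_2)=r_1^2/(f_3^2 r_2)$ is nonzero. Since $\theta$ is constant we have $\theta'=0$, so the third equation of the system forces the bracket to vanish:
\[
\ell\cos\theta-n\,f_2 g+nH\,f_2 h=\varepsilon\ell-n\varepsilon r_2^2+nH\,r_2 r_1=0 .
\]
Solving for $H$ yields $H=\varepsilon\,(nr_2^2-\ell)/(n r_1 r_2)$, and taking absolute values gives $|H|=|nr_2^2-\ell|/(n r_1 r_2)$, as claimed.

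This argument is a direct computation, so there is no substantial obstacle; the only point requiring a little care is justifying that the nonvanishing prefactor $h^2/(f_3^2f_2)$ may be cancelled — that is, that a constant-$f_2$ solution lives on an interval where $f_3>0$ and $r_2\in(0,1)$, so that it is the bracket, and not the prefactor, that must be zero. The sign $\varepsilon$ (the two choices $\theta\equiv 0$ or $\theta\equiv\pi$) is pure bookkeeping and disappears upon passing to $|H|$.
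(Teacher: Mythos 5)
Your proposal is correct and follows essentially the same route as the paper: deduce $\sin\theta\equiv 0$ from $f_2'=\sin\theta$, compute $g=\pm r_2$ and $h=r_1$, and force the bracket in the $\theta'$ equation to vanish since $\theta$ is constant. The only difference is that you explicitly justify cancelling the nonvanishing prefactor $h^2/(f_3^2 f_2)$, a point the paper's proof leaves implicit.
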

\begin{proof}
Since $(f_1(t),f_2(t))$ is parametrized by arc length and $\sin(\theta)=0$ then, either $\cos(\theta)=1$ and $f_1(t)=t$ or $\cos(\theta)=-1$ and $f_1(t)=-t$. We can disregard the constant because the system is autonomous. In this case 

$$g=f_2\cos\theta -f_1\sin\theta=r_2\cos\theta\,\hbox{ and }\, h=r_1=\sqrt{1-r_2^2}$$

Since $\theta$ is constant, then $\ell \cos\theta-nf_2g+nHf_2h=0$, therefore

$$\ell \cos\theta-nr_2^2\cos\theta+nHr_2r_1=0$$ and then 

$$|H|=\frac{|nr_2^2-\ell|}{nr_1r_2}$$

\end{proof}

\begin{lemma}\label{lemma2} If  $f_1(t)=c$ with $|c|=r_1$ is a solution of the ODE in Theorem \ref{thm:ode} then, $|H|=\frac{r_1}{r_2}$ with $r_2=\sqrt{1-r_1^2}$
\end{lemma}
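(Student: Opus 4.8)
The plan is to run the same argument as in Lemma~\ref{lemma1}, with the roles of the two coordinate functions interchanged: here it is a constant value of $f_{1}$ that pins down $\theta$, after which the $\theta'$-equation of Theorem~\ref{thm:ode} determines $H$.

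First I would note that $f_{1}(t)=c$ forces $f_{1}'(t)=\cos\theta(t)=0$ for every $t$, so $\theta(t)$ takes values in $\{\pm\pi/2\}$ modulo $2\pi$; since $\theta$ is continuous it must be a single one of these values, hence a genuine constant, giving $\sin\theta=\pm1$ and $\theta'\equiv 0$. As the system is autonomous we may normalize so that $f_{2}(t)=t$ (or $f_{2}(t)=-t$), although in fact only $\theta'=0$ will be used.

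Next I would evaluate the auxiliary quantities along such a solution. Because $\cos\theta=0$, the function $g=f_{2}\cos\theta-f_{1}\sin\theta$ reduces to $-c\sin\theta=\mp c$, so $g^{2}=c^{2}=r_{1}^{2}$ and hence $h=\sqrt{1-g^{2}}=\sqrt{1-r_{1}^{2}}=r_{2}$. On the domain $D_{+}$ we have $f_{2}>0$ and $f_{3}>0$, and $h=r_{2}>0$ since $r_{1}<1$, so the coefficient $h^{2}/(f_{3}^{2}f_{2})$ never vanishes. Therefore $\theta'=0$ forces the bracket to vanish identically: $\ell\cos\theta-nf_{2}g+nHf_{2}h=0$. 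Using $\cos\theta=0$ and cancelling $nf_{2}>0$ yields $Hh=g$, i.e. $H=g/h$, and taking absolute values gives $|H|=|g|/h=r_{1}/r_{2}$, as claimed. Geometrically this is the totally umbilical $3$-sphere $M_{1}$ of radius $r_{2}$, whose mean curvature is indeed $r_{1}/r_{2}$.

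There is essentially no obstacle here; the only points that deserve a word are that $\theta$ cannot jump between $+\pi/2$ and $-\pi/2$ (continuity of $\theta$) and that the prefactor $h^{2}/(f_{3}^{2}f_{2})$ is nonzero, so that it is the bracket — not the prefactor — that must vanish (this is exactly where the hypotheses $r_{1}<1$ and working in $D_{+}$ enter). Everything else is direct substitution.
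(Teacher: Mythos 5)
Your proof is correct and follows essentially the same route as the paper's: constancy of $f_{1}$ forces $\cos\theta=0$ and $\theta$ constant, then the vanishing of the bracket in the $\theta'$-equation gives $Hh=g$ and hence $|H|=r_{1}/r_{2}$. The only difference is that you spell out two small justifications the paper leaves implicit (continuity of $\theta$ ruling out jumps between $\pm\pi/2$, and the nonvanishing of the prefactor $h^{2}/(f_{3}^{2}f_{2})$), which is a welcome but inessential refinement.
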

\begin{proof}
Since $(f_1(t),f_2(t))$ is parametrized by arc length and $\cos(\theta)=0$ then, either $\sin(\theta)=1$ and $f_2(t)=t$ or $\sin(\theta)=-1$ and $f_2(t)=-t$. In this case 

$$g=f_2\cos\theta -f_1\sin\theta=-r_1\sin\theta\,\hbox{ and }\, h=r_2=\sqrt{1-r_1^2}$$

Since $\theta$ is constant, then $\ell \cos\theta-nf_2g+nHf_2h=0$, therefore

$$-g+Hh=r_1\sin \theta+Hr_2=0$$ 

and then 

$$|H|=\frac{r_1}{r_2}$$

\end{proof}
\begin{lemma}\label{lemma3} If $(f_1(t),f_2(t))$ is a solution of the ODE in Theorem \ref{thm:ode} that satisfies $f_3(t)=\sqrt{1-f_1(t)^2-f_2(t)^2}=r_2$ then, $|H|=\frac{|\ell+1-nr_1^2|}{nr_1r_2}$ with $r_1=\sqrt{1-r_2^2}$
\end{lemma}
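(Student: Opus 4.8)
The plan is to follow the template of Lemmas~\ref{lemma1} and~\ref{lemma2}: use the extra constraint to make the geometry explicit, and then read off $H$ from the third equation of the autonomous system. The only new feature is that here $\theta$ is not constant---only $\theta'$ will be.

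First I would note that $f_3(t)^2 = 1 - f_1(t)^2 - f_2(t)^2 = r_2^2$ forces $f_1^2 + f_2^2 = 1 - r_2^2 = r_1^2$, so $(f_1,f_2)$ parametrizes an arc of the circle of radius $r_1$ (and $r_1>0$ since $f_2>0$ on $D_+$). Writing $f_1 = r_1\cos\psi$, $f_2 = r_1\sin\psi$ and imposing the arc-length normalization $(f_1')^2 + (f_2')^2 = 1$ gives $r_1^2(\psi')^2 = 1$, hence $\psi' \equiv \varepsilon/r_1$ for a fixed sign $\varepsilon = \pm 1$. Comparing $f_1' = \cos\theta$, $f_2' = \sin\theta$ with $f_1' = -\varepsilon\sin\psi$, $f_2' = \varepsilon\cos\psi$ gives $\theta = \psi + \varepsilon\tfrac{\pi}{2}$ modulo $2\pi$, so $\theta' \equiv \varepsilon/r_1$ is also constant.

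Next I would compute the auxiliary quantities. A one-line trigonometric simplification yields $g = f_2\cos\theta - f_1\sin\theta = -\varepsilon r_1$, whence $h = \sqrt{1-g^2} = \sqrt{1-r_1^2} = r_2$, so $h^2/f_3^2 = 1$; and $\cos\theta = -\varepsilon\sin\psi$ gives $\cos\theta/f_2 = -\varepsilon/r_1$, the division being legitimate since $f_2 = r_1\sin\psi > 0$. Substituting into the third equation of Theorem~\ref{thm:ode},
\[
\theta' = \frac{h^2}{f_3^2 f_2}\bigl(\ell\cos\theta - n f_2 g + n H f_2 h\bigr) = \frac{\ell\cos\theta}{f_2} - n g + n H h = -\frac{\varepsilon\ell}{r_1} + \varepsilon n r_1 + n H r_2,
\]
and equating with $\theta' = \varepsilon/r_1$ gives $\varepsilon\bigl(\tfrac{\ell+1}{r_1} - n r_1\bigr) = n H r_2$, i.e. $H = \varepsilon\,\dfrac{\ell+1-n r_1^2}{n r_1 r_2}$. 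Taking absolute values yields $|H| = \dfrac{|\ell+1-n r_1^2|}{n r_1 r_2}$.

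There is no genuine obstacle here; the only points needing care are the consistent bookkeeping of the orientation sign $\varepsilon$ coming from the arc-length parametrization, and the observation that $f_2>0$ on $D_+$ is what allows division by $f_2$ in the $\theta'$-equation. As a consistency check, setting $\ell=1$, $n=3$ and writing $r_3$ for the constant value of $f_3$ (so that $r_1=\sqrt{1-r_3^2}$) recovers the mean curvature $H_3 = \tfrac13\bigl(\tfrac{2r_3}{\sqrt{1-r_3^2}} - \tfrac{\sqrt{1-r_3^2}}{r_3}\bigr)$ of the Clifford piece $M_3$ from Section~\ref{sec:cmcsing}, which is exactly the case in which $f_3$ is constant.
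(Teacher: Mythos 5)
Your proof is correct and takes essentially the same route as the paper: recognize that $(f_1,f_2)$ traces an arc of the circle of radius $r_1$, use the arc-length normalization to conclude $\theta'$ is constant, compute $g=\mp r_1$ and $h=r_2$, and solve the third equation of the system for $H$. The only difference is cosmetic --- your sign $\varepsilon$ treats both orientations simultaneously (and you explicitly justify the division by $f_2>0$), whereas the paper works out one orientation and remarks that the other leads to the same conclusion.
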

\begin{proof}
Since $(f_1(t),f_2(t))$ is parametrized by arc length and $|(f_1(t),f_2(t))|=r_1$  then, either $f_1=r_1\sin(\frac{t}{r_1})$ and $f_2=r_1\cos(\frac{t}{r_1})$ or
$f_1=r_1\sin(-\frac{t}{r_1})$ and $f_2=r_1\cos(-\frac{t}{r_1})$. If  

$$f_1=r_1\sin(\frac{t}{r_1})\, \hbox{ and }\, f_2=r_1\cos(\frac{t}{r_1})$$

then $\theta(t)=-\frac{t}{r_1}$,

$$g=f_2\cos\theta -f_1\sin\theta=r_1\cos(\frac{t}{r_1})\cos(-\frac{t}{r_1})-r_1\sin(\frac{t}{r_1})\sin(-\frac{t}{r_1})=r_1$$

 $h=r_2$ and 

$$\ell \cos\theta-nf_2g+nHf_2h=\ell \cos(-\frac{t}{r_1})-nr_1^2\cos(\frac{t}{r_1})+nHr_2r_1\cos(\frac{t}{r_1})$$

Therefore, the equation $\theta'=\frac{h^2}{f_3^2f_2}\left( \ell \cos\theta-nf_2g+nHf_2h\right)$ reduces to

$$-\frac{1}{r_1}=\frac{r_2^2}{r_2^2r_1}\left(\ell-nr_1^2+nHr_2r_1 \right)$$

and then

$$|H|=\frac{|\ell+1-nr_1^2|}{nr_1r_2}$$

We can check that the case $f_1=r_1\sin(-\frac{t}{r_1})$ and $f_2=r_1\cos(-\frac{t}{r_1})$ leads to the same conclusion.
\end{proof}

The next theorem shows we could have gotten the values for $H$ and the radii of the totally umbilical spheres and Clifford hypersurfaces in the piecewise-CMC hypersurface $M$  defined in \eqref{mM} form the differential equation above.

\begin{theorem}\label{thm:pwcmc}

If $\ell=m$ and 

\begin{eqnarray}\label{eqn:vHandrs}
H=-\sqrt{\frac{n+1}{3n-1}},\quad r_1=\sqrt{\frac{n+1}{4 n}} \quad\hbox{and}\quad r_2=\sqrt{\frac{3n-1}{4 n}}
\end{eqnarray}

there exists a solution of the ODE in Theorem \ref{thm:ode} that satisfies the following four conditions

\begin{enumerate}
\item
$f_2(t)=r_2$ is a solution of the CMC equation.
\item
There exists a solution of the CMC equation that makes 
$$f_3(t)=\sqrt{1-f_1^2(t)-f_2(t)^2}=r_2$$
\item
$f_1(t)=r_1$ is another solution of the CMC equation.
\item
$f_1(t)=-r_1$ is another solution of the CMC equation.
\end{enumerate}
Moreover, if $\ell\ne m$, then we cannot find numbers $H$, $r_1$ and $r_2$ that satisfies the conditions above.
\end{theorem}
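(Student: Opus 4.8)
My plan is to reduce the entire statement to Lemmas~\ref{lemma1},~\ref{lemma2} and~\ref{lemma3}. Each of the four listed conditions is precisely the hypothesis of one of these: condition~(1) is the hypothesis of Lemma~\ref{lemma1}, condition~(2) that of Lemma~\ref{lemma3}, and conditions~(3) and~(4) are the two instances ($f_1\equiv r_1$ and $f_1\equiv-r_1$) of the hypothesis of Lemma~\ref{lemma2}. Conversely, I would observe from the proofs of those lemmas that the elementary profiles in question --- $f_2\equiv r_2$ with $f_1$ linear, $f_1\equiv\pm r_1$ with $f_2$ linear, and the circular profile with $f_3\equiv r_2$ --- genuinely satisfy the third equation of the system in Theorem~\ref{thm:ode} \emph{exactly when} $|H|$ equals the value computed in that lemma, the only residual freedom being the orientation of the curve, which fixes the sign of $H$. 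Each of these lemmas also forces $r_1^{2}+r_2^{2}=1$. So, writing $n=\ell+m+1$, the theorem becomes the assertion that the three numbers
\[
\frac{|nr_2^{2}-\ell|}{nr_1r_2},\qquad\frac{r_1}{r_2},\qquad\frac{|\ell+1-nr_1^{2}|}{nr_1r_2}
\]
can be made to coincide for some $r_1,r_2>0$ with $r_1^{2}+r_2^{2}=1$ if and only if $\ell=m$, together with a sign bookkeeping for $H$.

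For the case $\ell=m$ I would set $\ell=(n-1)/2$ and substitute $r_1^{2}=\tfrac{n+1}{4n}$, $r_2^{2}=\tfrac{3n-1}{4n}$. A direct check gives $r_1^{2}+r_2^{2}=1$, $nr_2^{2}-\ell=\ell+1-nr_1^{2}=\tfrac{n+1}{4}$, $nr_1r_2=\tfrac14\sqrt{(n+1)(3n-1)}$, and $r_1/r_2=\sqrt{(n+1)/(3n-1)}$, so all three displayed expressions equal $\sqrt{(n+1)/(3n-1)}=|H|$. Then, to confirm that $H=-\sqrt{(n+1)/(3n-1)}$ really does admit all four solutions, I would run through the relevant equations: from the proof of Lemma~\ref{lemma1}, the third equation for $f_2\equiv r_2$ reads $(\ell-nr_2^{2})\cos\theta+nHr_1r_2=0$ and is solved by $\cos\theta=-1$ (so $f_1(t)=-t$); from the proof of Lemma~\ref{lemma2}, the equation for $f_1\equiv r_1$ reads $r_1\sin\theta+Hr_2=0$ and is solved by $\sin\theta=1$, while that for $f_1\equiv-r_1$ reads $-r_1\sin\theta+Hr_2=0$ and is solved by $\sin\theta=-1$; and from the proof of Lemma~\ref{lemma3}, the circular profile $f_1=r_1\sin(t/r_1),\ f_2=r_1\cos(t/r_1)$ satisfies the third equation with $H=(nr_1^{2}-\ell-1)/(nr_1r_2)=-\tfrac{n+1}{4nr_1r_2}=-\sqrt{(n+1)/(3n-1)}$. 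Hence all four conditions hold for the single value $H=-\sqrt{(n+1)/(3n-1)}$.

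For the case $\ell\neq m$ I would argue by contradiction. If $H,r_1,r_2$ satisfied all four conditions, then by Lemma~\ref{lemma2} we have $|H|=r_1/r_2$ and $r_1^{2}+r_2^{2}=1$; plugging this into Lemmas~\ref{lemma1} and~\ref{lemma3} and using $\ell+1-nr_1^{2}=\ell+1-n(1-r_2^{2})=nr_2^{2}-m$ yields $n|H|r_1r_2=|nr_2^{2}-\ell|=|nr_2^{2}-m|$. Thus either $\ell=m$ or $nr_2^{2}=\tfrac{\ell+m}{2}=\tfrac{n-1}{2}$. In the latter case $r_2^{2}=\tfrac{n-1}{2n}$, $r_1^{2}=\tfrac{n+1}{2n}$, so $|H|=r_1/r_2=\sqrt{(n+1)/(n-1)}$ and $nr_1r_2=\tfrac12\sqrt{n^{2}-1}$, whence $n|H|r_1r_2=\tfrac{n+1}{2}$; but $n|H|r_1r_2=|nr_2^{2}-\ell|=\tfrac12|m-\ell|$, forcing $|m-\ell|=n+1=\ell+m+2$, impossible for $\ell,m\ge0$. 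So $\ell=m$, which is the contrapositive of the ``moreover'' claim; and then $nr_1^{2}=n|H|r_1r_2=|nr_2^{2}-\ell|$ with $\ell=(n-1)/2$ forces $r_2^{2}=\tfrac{3n-1}{4n}$ (the other sign would give $\ell=n$, absurd), so $r_1$ and $H=-r_1/r_2$ take the values in \eqref{eqn:vHandrs}.

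The step I expect to require the most care --- everything else being bookkeeping with $n=\ell+m+1$ and a watch on absolute values --- is the sign of the mean curvature. Lemmas~\ref{lemma1}--\ref{lemma3} determine only $|H|$, so the crux is to verify that the orientations of the four elementary solutions can be chosen compatibly, all yielding the \emph{same} negative value $H=-\sqrt{(n+1)/(3n-1)}$; the explicit sign computations listed above in the case $\ell=m$ are precisely what establish this.
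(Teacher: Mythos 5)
Your proposal is correct and follows essentially the same route as the paper: combine Lemmas~\ref{lemma1}--\ref{lemma3} to force $r_1^2+r_2^2=1$, the stated values of $r_1,r_2,|H|$, and $n=2\ell+1$ (hence $\ell=m$), and then verify the four explicit profiles directly. Your algebra in the necessity step (using $\ell+1-nr_1^2=nr_2^2-m$ to split into $\ell=m$ versus $2nr_2^2=\ell+m$) is a minor reorganization of the paper's computation, and your explicit sign bookkeeping for $H$ is more careful than the paper's terse ``we can check'' at the end, but the argument is the same.
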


\begin{proof} Let us start assuming that there exists a solution that satisfies the four properties above. From Lemma \ref{lemma1} and \ref{lemma3}  we have that 

$$\frac{|\ell+1-nr_1^2|}{nr_1r_2}=\frac{r_1}{r_2}.$$

From the equation above we get that $\ell+1-nr_1^2<0$ then $\ell+1=0$, therefore and $\ell+1-nr_1^2>0$ and we $\frac{|\ell+1-nr_1^2|}{nr_1r_2}=\frac{r_1}{r_2}$ 
reduces to

\begin{eqnarray}\label{landn}
1+\ell=2 nr_1^2
\end{eqnarray}

Now, using Lemma \ref{lemma2} we have that 

$$\frac{|\ell-nr_2^2|}{nr_1r_2}=\frac{r_1}{r_2}.$$

This time, notice that if $\ell-nr_2^2>0$, then $\ell=nr_1^2+nr_2^2=n$ which is not possible because $\ell<n$. Therefore $\ell-nr_2^2<0$ and

$$nr_2^2=\ell+nr_1^2=3nr_1^2-1=3n-3nr_2^2-1$$

and 

$$r_2=\sqrt{\frac{3n-1}{4n}}\, \hbox{ and } \, r_1=\sqrt{\frac{n+1}{4n}}$$

Finally from the equation 

$$|H|=\frac{\ell+1-nr_1^2}{nr_1r_2}=\frac{nr_2^2-\ell}{nr_1r_2}$$

We conclude that 

$$|H|=\sqrt{\frac{n+1}{3 n-1}}\quad \hbox{ and }\quad n=2\ell+1.$$

Finally if $H$, $r_1$ and $r_2$ are as in Equation \eqref{eqn:vHandrs}, we can check that the solutions

\begin{enumerate}
\item
$f_2(t)=r_2$ and $f_1=-t$ 
\item

$f_1(t)=r_1\sin(\frac{t}{r_1})$ and $f_2(t)=r_1\cos(\frac{t}{r_1})$
\item
$f_1(t)=r_1$ and $f_2(t)=t$ 
\item
$f_1(t)=-r_1$ and $f_2(t)=-t$

\end{enumerate}

satisfy the four conditions above.

\end{proof}

\begin{remark}\label{rem:singsol}
The previous Theorem shows that there exists a periodic solution of  the ODE in Theorem \ref{thm:ode} defined on the interval $[0,2T]$ with singularities at $T_1,T_2,T_3$ and $T_4$  with

\begin{eqnarray*}
T_1&=&\frac{1}{4} \pi  \sqrt{\frac{1}{n}+1}, \\
T_2&=&\frac{1}{4} \pi  \sqrt{\frac{1}{n}+1}+\frac{1}{2} \sqrt{3-\frac{1}{n}}\\
T_3&=&\frac{1}{4} \pi  \sqrt{\frac{1}{n}+1}+\sqrt{\frac{1}{n}+1}+\frac{1}{2} \sqrt{3-\frac{1}{n}},\\
T_4&=&\frac{1}{4} \pi  \sqrt{\frac{1}{n}+1}+\sqrt{\frac{1}{n}+1}+\sqrt{3-\frac{1}{n}}\\
2 T&=&\frac{1}{2} \pi  \sqrt{\frac{1}{n}+1}+\sqrt{\frac{1}{n}+1}+\sqrt{3-\frac{1}{n}}
\end{eqnarray*}

For this solution, the function $f_1(t)$ and $f_2(t)$ are continuous with $f_1(0)=0$ and $f_2(0)=r_1$ the function $\theta(t)$ is piecewise linear but it just jump discontinuities at each singularity. Graph 

\end{remark}

\begin{figure}[h]
\centerline{\includegraphics[scale=0.52]{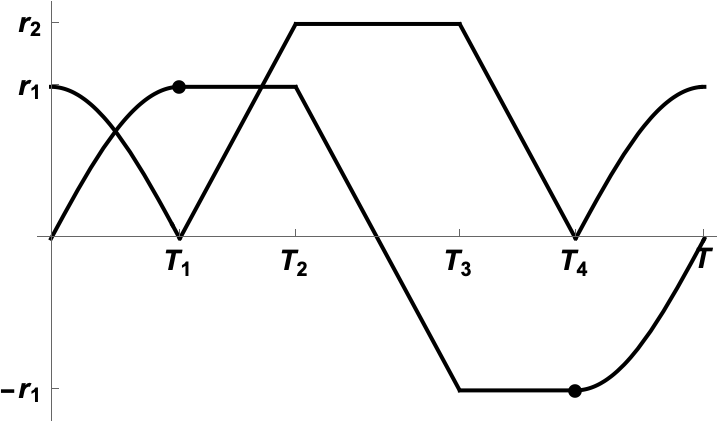} \hskip.2cm \includegraphics[scale=0.52]{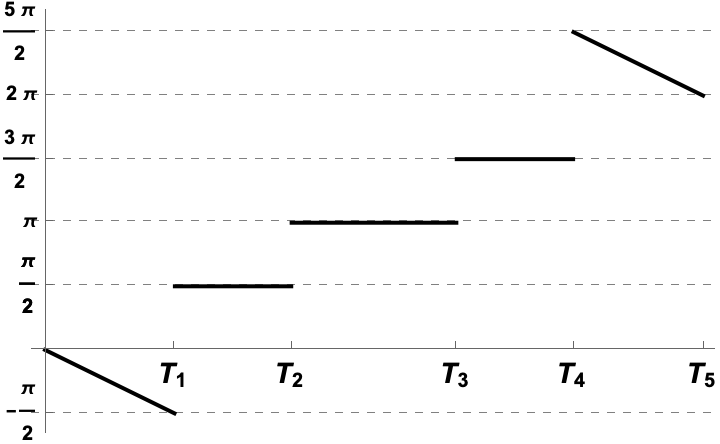}}
\caption{The piecewise–CMC hypersurface defined in Equation~\ref{mM} corresponds to a periodic solution \((\theta(t),f_{1}(t),f_{2}(t))\) that exhibits four singularities.  The graphs of \(f_{1}\) and \(f_{2}\) are shown on the left, and the graph of \(\theta\) is shown on the right.}
\label{fig:graphf1andf2}
\end{figure}

%%
%%Remark on the ellipse
%%
\begin{remark}\label{rem:singsolellipse}
The length of the Ellipse 

$$E=\{(x,y)\in \bfR{2}:x^2+\frac{1}{2}y^2=1\}$$

 can be given explicitly in terms of elliptic functions. We have that 
 
 $$2T=2\sqrt{2} \hbox{EllipticE}(-1)=2\sqrt{2}\int_0^{\frac{\pi}{2}}\sqrt{\sin ^2(t)+1}\, dt\approx 5.402575524$$

Let us denote by 

\begin{eqnarray*}
& & T_1=\frac{T}{4}\approx  1.350643881,\quad T_2=2 T1=T,\quad T_3=3T_1\quad \hbox{and}\\\
\quad \quad  \quad \quad && \quad T_4=4 T_1=2T \hbox{ the length of the Ellipse $E$.}
\end{eqnarray*}

Since the manifold $M_f$ given in Equation \eqref{mMf} is minimal then we have that if $(g_1(t),g_2(t))$ is a  clockwise parametrization of the ellipse $E$ by arc length starting at
the point $(0,\frac{1}{\sqrt{2}})$ from $0$ to $T_1$ and $(g_3(t),g_4(t))$ is a  counter clockwise parametrization of the ellipse $E$ by arc length starting at
the point  $(1,0)$ from $T_1$ to $3T_1=T_3$ and $(g_5(t),g_6(t))$ is a  clockwise parametrization of the ellipse $E$ by arc length starting at
the point  $(-1,0)$ from $T_3$ to $4T_1=2T$, then the functions

\[
f_{1}(t)=
\begin{cases}
  g_{1}(t), & t\in[0,T_{1}],\\[6pt]
  g_{3}(t), & t\in[T_{1},T_{3}]\\[6pt]
  g_{5}(t), & t\in[T_{3},T_{5}].
\end{cases}
\]

and

\[
f_{2}(t)=
\begin{cases}
  g_{2}(t), & t\in[0,T_{1}],\\[6pt]
  g_{4}(t), & t\in[T_{1},T_{3}]\\[6pt]
  g_{5}(t), & t\in[T_{3},T_{5}].
\end{cases}
\]
a periodic solution of  the ODE in Theorem \ref{thm:ode} defined on the interval $[0,2T]$ with singularities at $T_1,T_2,T_3$ and $T_4$. Figure \ref{fig:gfMf}

\begin{figure}[h]
\centerline{\includegraphics[scale=0.72]{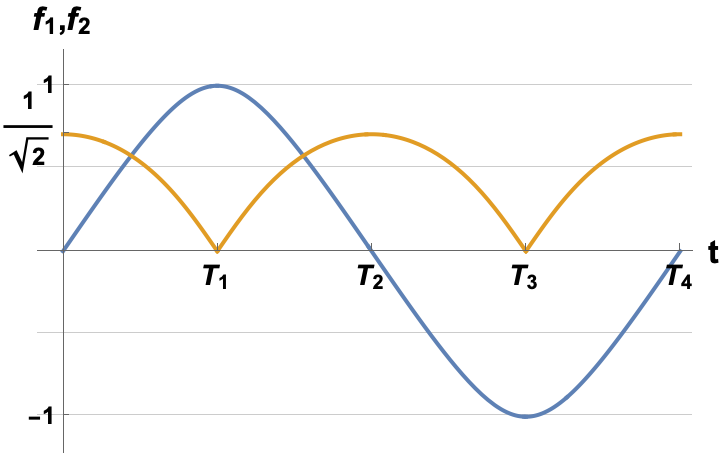}}
\caption{The  CMC hypersurface $M_f$ defined in Equaation  \ref{fig:gfMf} can be regarded as a periodic  solution of the $(\theta(t),f_1(t), f_2(t)$ with three singularities. Here we show  the graphs of $f_1$ and $f_2$ associated with this solution with singularities }
\label{fig:gfMf}
\end{figure}
\end{remark}

\section{The family in $\bfS{4}$}\label{sec:S4}

When we consider solution of the ODE in Theorem \ref{thm:ode} that satisfies that
\begin{eqnarray}\label{ic}
f_1(0)=0,\quad 0<f_2(0)=a<1, \quad \theta^\prime(0)=0
\end{eqnarray} 

we have shown  in \cite{P} and \cite{Pa} that, if for some positive $T$ we have a solution that satisfies $f_1(T)=0$ and $\theta(T)=\pi$, then this solution can be extended to a $2T$-periodic solution. Notice that when we have found a value $T$ then, the following vector will determine this solution

$$Z=(a,H,T)\, .$$

With this in mind, we need to search for points in the three dimensional space 

\begin{eqnarray}\label{aHTspace}
\Omega=\{(a,H,T)\in\bfR{3}:0<a<1\}\, ,
\end{eqnarray}

that are solution of the   ODE in Theorem \ref{thm:ode} and satisfy the following system of two equation and three variables

\begin{eqnarray}\label{theaHTsytem}
\begin{cases} F_1(a,H,T)&=0\\
\Theta(a,H,T)&=\pi
  \end{cases}
  \end{eqnarray}

where 

$$t\longrightarrow F_1(a,H,t),t\longrightarrow F_2(a,H,t),t \longrightarrow \Theta(a,H,t) $$

 is the solution of the system with $F_1(a,H,0)=0$, $F_2(a,H,0)=a$ and  $\Theta(a,H,0)=0$. 
 
 \begin{remark}\label{rem:defLambda}
 By the implicit function theorem we have that anytime we have a solution of the System \eqref{theaHTsytem}, if the gradient of the the functions $F_1$ and $\Theta$ are linearly independent then the solution can locally extended to a curve of points satisfying the system. In \cite{Pa} the author study one curve that solve the system and in this paper we will study a different curve, that we will call $\Lambda$ that solve the system as well.
 \end{remark}
 
 From  Remark \ref{rem:singsol} we have that the point
 
 $$Z_0=\left(\frac{1}{\sqrt{3}},-\frac{1}{\sqrt{2}},\frac{2 \sqrt{2}+\pi +2}{2 \sqrt{3}}\right)\approx (0.57735, -0.707107, 2.30075)$$
 
 would be a solution of the system \eqref{theaHTsytem} if only the solutions shown in Figure \ref{fig:graphf1andf2} were smooth. Also from Remark \ref{rem:singsolellipse} we have that the point

\begin{eqnarray}\label{pZf}
Z_f=\left(\frac{1}{\sqrt{2}},0,\sqrt{2}\hbox{ EllipticE}(-1)\right)\approx (0.707107, 0, 2.70129)
\end{eqnarray}
  
  would be a solution of the system \eqref{theaHTsytem} if only the solutions shown in Figure \ref{fig:gfMf} were smooth.

 We can show numerically that the points $Z_0$ and $Z_f$ are the limit point of a curve of points in $Z\in \Lambda$ that solves the system of equations  \eqref{theaHTsytem}. Figure \ref{fig:Z3S4} shows the curve $\Lambda$.

 \begin{figure}[h]
\centerline{\includegraphics[scale=0.45]{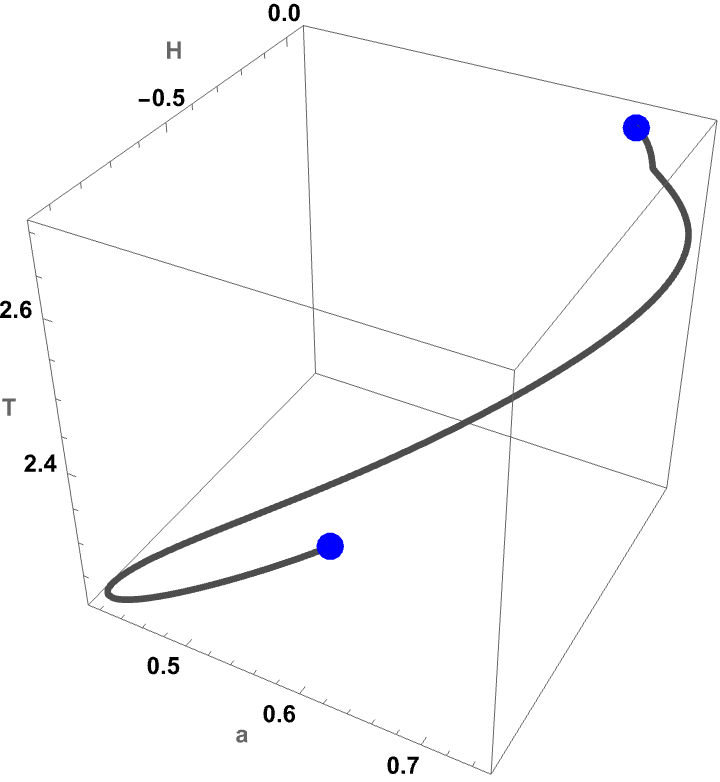}\hskip1cm\includegraphics[scale=0.45]{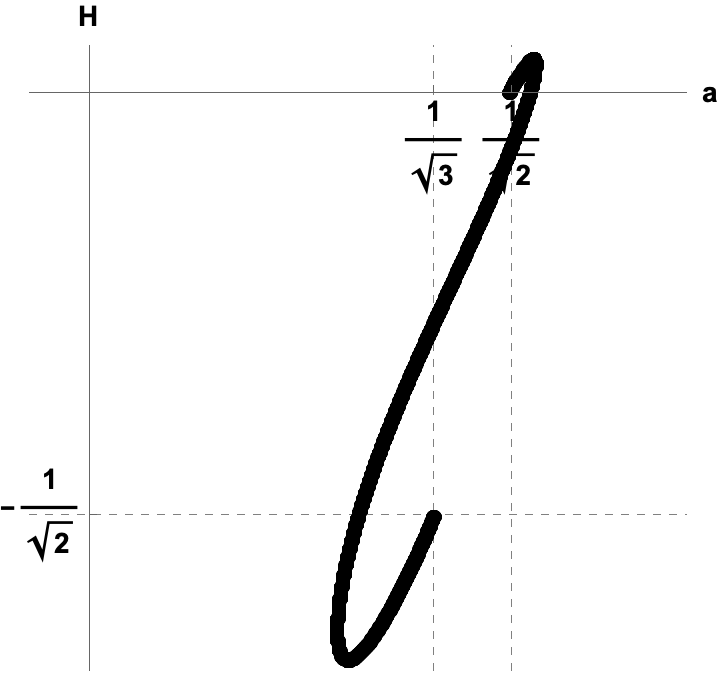}}
\caption{Graph of the curve $\Lambda$ that solve the system of equations  \eqref{theaHTsytem}. Starting form the limit point $Z_0$ (the one with H$H<0$, we notice that the values of $H$ start decreasing reaching a minimum value of $H$, then, they increase until they reach a positive maximum and finally $H$ decreases to zero to approach the limit point $Z_f$. We also show the projection of $\Lambda$ on the $a-H$ plane. } 
\label{fig:Z3S4}
\end{figure}
 
We use the continuation method to numerically get points in $\Lambda$. Once we have a numerical solution $Z_k=(a_k,H_k,T_k)$ of the system  \eqref{theaHTsytem},  we compute the gradient of the two equations in the system at that point -we  compute $\nabla F_1(Z_0)$ and  $\nabla \Theta(Z_0)$- and then, we look for another solution by moving the point $Z_k$ in the direction perpendicular to both gradients. See \cite{P}, \cite{Pa}.

  \begin{remark}
All the solutions considered in this paper satisfy both equations in the system \eqref{theaHTsytem} up to an error of $10^{-7}$. As shown through detailed and lengthy arguments in \cite{Pnm} and \cite{P3bp}, such solutions can, in principle, be proven to be mathematically exact. For the sake of simplicity, we will say that a solution satisfies the system, rather than repeatedly stating that it satisfies the system up to an error of $10^{-7}$.
\end{remark}

 Let us start describing the curve $\Lambda$ that produces the new family of CMC hypersurfaces.
 
 \subsection{Smooth solution very close to $Z_0$} The point
 
 $$Z_1=(a,H,T)=(0.577096, -0.707791, 2.30054)$$ 
 
 solves the system of equations  \eqref{theaHTsytem}. Figure \ref{fig:Z1S4} shows the graphs of the profile curve  and the functions $f_1,f_2$ and $\theta$ associated with the parameters in $Z_1$. Since the profile curve associate with this value of $H$ and this initial condition is embedded then we have an  embedded hypersurface with CMC  $-0.707791$. For the naked eye, if we superpose the graphs of the solutions of $f_1$ and $f_2$ in Figures \ref{fig:graphf1andf2} and \ref{fig:Z1S4}  we could not tell the difference between these to functions, nevertheless the the functions $f_1$ and $f_2$ and $\theta$ in Figures \ref{fig:Z1S4} are $C^\infty$.
 
 \begin{figure}[h]
\centerline{\includegraphics[scale=0.42]{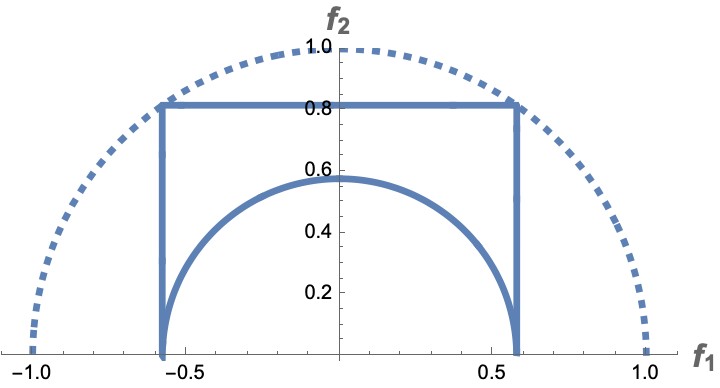} \hskip.2cm \includegraphics[scale=0.42]{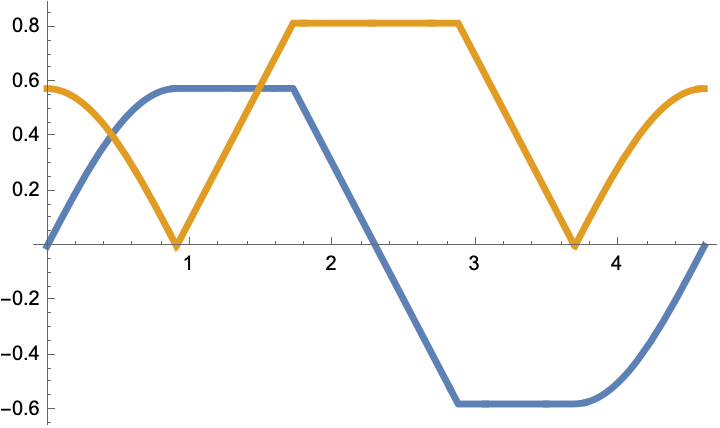}\hskip.2cm \includegraphics[scale=0.42]{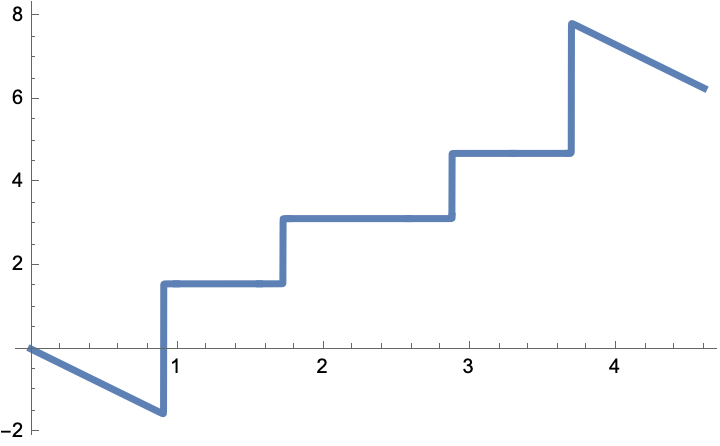}}
\caption{Graph of the profile curve associated with the solution $Z_1=(a,H,T)=(0.577096, -0.707791, 2.30054)$. This solution provides a smooth embedded CMC with curvature $\bfS{4}$ very close to the manifold
$M$ defined in Equation \eqref{mM}.}
 \label{fig:Z1S4}
\end{figure}

Points in the profile curve described in Figure \ref{fig:Z1S4} (image on the left)  that seems to be two vertical segments correspond to points in the smooth CMC hypersurface that are close to the two totally umbilical hypersurfaces in $\bfS{4}$ described as points in the $\bfS{4}$ such that $x_5=\frac{1}{\sqrt{3}}$ and $x_5=-\frac{1}{\sqrt{3}}$. On the other hand, points that seem to be in the horizontal line and in the semi-circle correspond to points in the smooth CMC that are close to Clifford hypersurfaces in $\bfS{4}$. 

 \subsection{A little farther from  $Z_0$} The point
 
 $$Z_2=(a,H,T)=(0.514328, -0.844304, 2.26274)$$ 
 
 solves the system of equations  \eqref{theaHTsytem}. Figure \ref{fig:Z2S4} shows the graphs of the profile curve  and the functions $f_1,f_2$ and $\theta$ associated with the parameters in $Z_2$. We can see that this time we have an embedded hypersurface with CMC  $-0.844304$. We notice that as we move away from $Z_0$, the values for $H$ becomes more negative, we also notice that $T$ decreses.
 
 \begin{figure}[h]
\centerline{\includegraphics[scale=0.42]{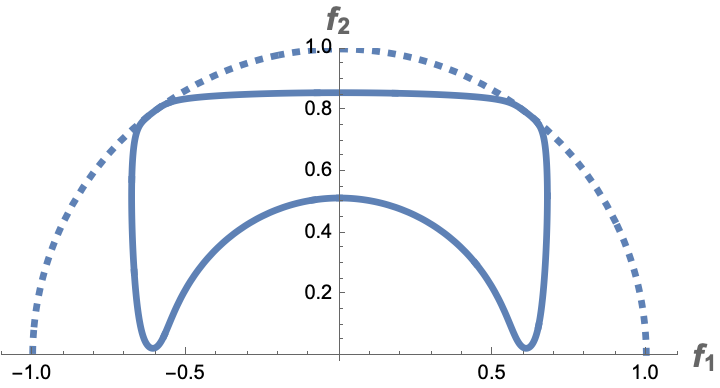} \hskip.2cm \includegraphics[scale=0.42]{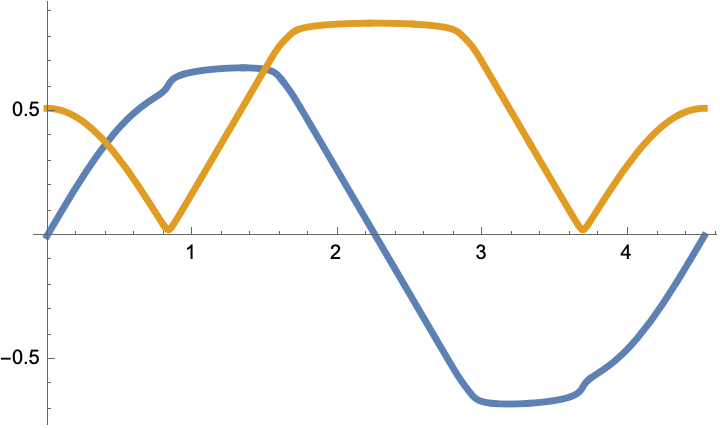}\hskip.2cm \includegraphics[scale=0.42]{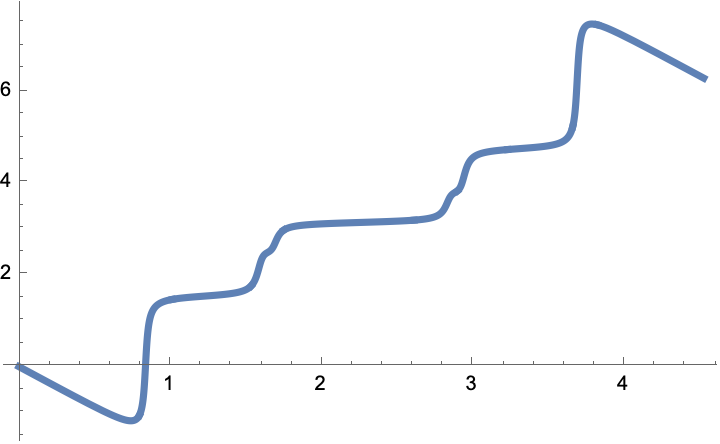}}
\caption{Graph of the profile curve associated with the solution $Z_2=(a,H,T)=(0.514328, -0.844304, 2.26274)$. This solution provides an embedded CMC  hypesurface in$\bfS{4}$} 
\label{fig:Z2S4}
\end{figure}

%%Z3
 \subsection{The lowest value of H in the familiy} The point
 
 $$Z_3=(a,H,T)=(0.433855, -0.947962, 2.22231)$$ 
 
 solves the system of equations  \eqref{theaHTsytem}. Figure \ref{fig:Z3S4} shows the graphs of the profile curve  and the functions $f_1,f_2$ and $\theta$ associated with the parameters in $Z_3$. We can see that this time we have an embedded hypersurface with CMC  $-0.947962$. This value of $H$ will be the lowest in the family. 
  
 \begin{figure}[h]
\centerline{\includegraphics[scale=0.42]{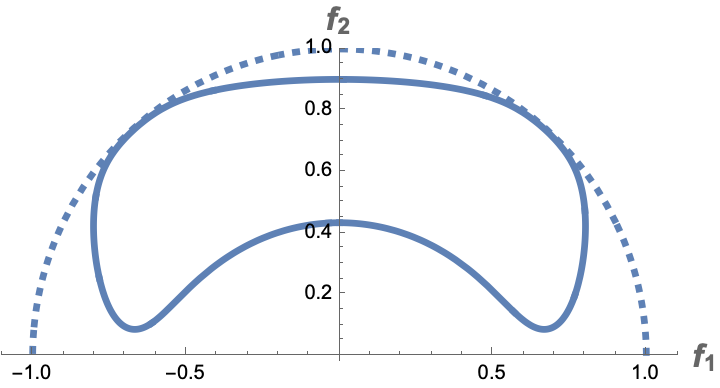} \hskip.2cm \includegraphics[scale=0.42]{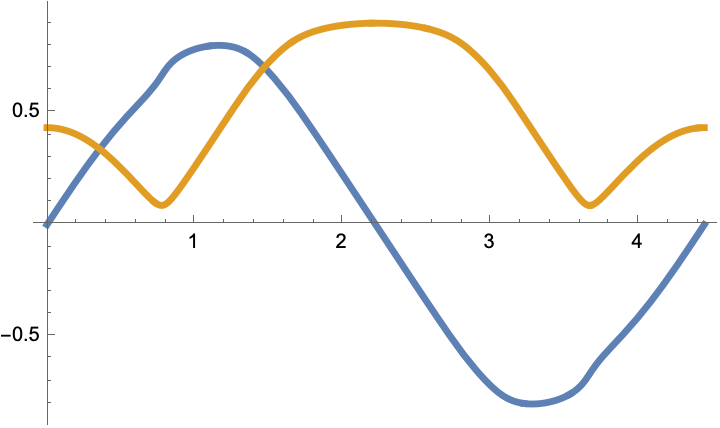}\hskip.2cm \includegraphics[scale=0.42]{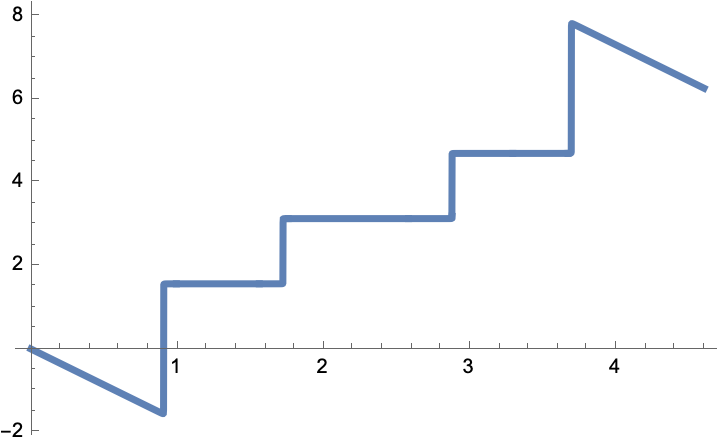}}
\caption{Graph of the profile curve associated with the solution $Z_2=(a,H,T)=(0.433855, -0.947962, 2.22231)$. This solution provides an embedded CMC  hypesurface in $\bfS{4}$} 
\label{fig:Z3S4}
\end{figure}

%%Z4
 \subsection{An still embedded example} The point
 
 $$Z_4=(a,H,T)=(0.635046, -0.258674, 2.37217)$$ 
 
 solves the system of equations  \eqref{theaHTsytem}. Figure \ref{fig:Z4S4} shows the graphs of the profile curve  and the functions $f_1,f_2$ and $\theta$ associated with the parameters in $Z_4$. We can see that this time we have an embedded hypersurface with CMC  $-0.258674$.  $H$ is still negative but it will eventually reach zero and some positive numbers before coming back to positive values close to zero.
 
 \begin{figure}[h]
\centerline{\includegraphics[scale=0.42]{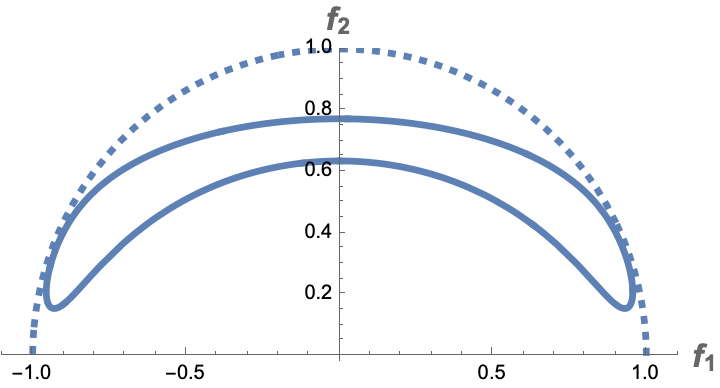} \hskip.2cm \includegraphics[scale=0.42]{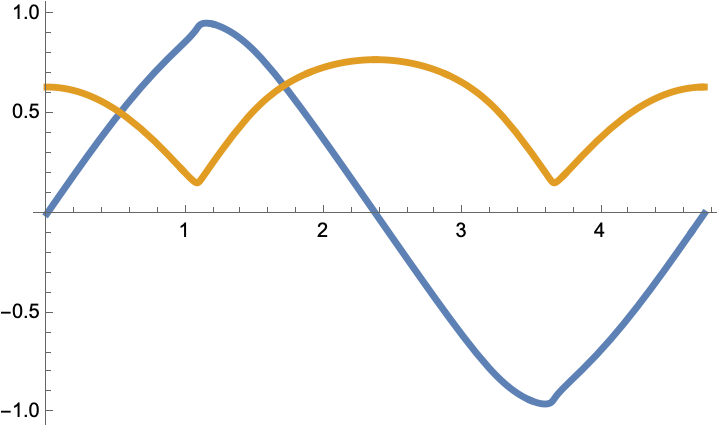}\hskip.2cm \includegraphics[scale=0.42]{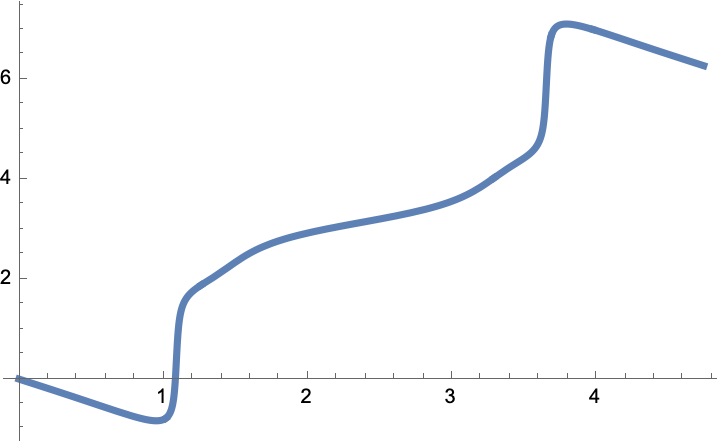}}
\caption{Graph of the profile curve associated with the solution $Z_4=(a,H,T)=(0.635046, -0.258674, 2.37217)$. This solution provides an embedded CMC  hypesurface in $\bfS{4}$} 
\label{fig:Z4S4}
\end{figure}

%%Z5
 \subsection{First non embedded example} The point
 
 $$Z_5=(a,H,T)=(0.707096, -0.0899734, 2.45894)$$ 
 
 solves the system of equations  \eqref{theaHTsytem}. Figure \ref{fig:Z5S4} shows the graphs of the profile curve  and the functions $f_1,f_2$ and $\theta$ associated with the parameters in $Z_5$. We can see that this time we have an embedded hypersurface with CMC  $-0.0899734$. The profile curve is not longer embedded and therefore, the hypersurface is not embedded either. 
 
 \begin{figure}[h]
\centerline{\includegraphics[scale=0.42]{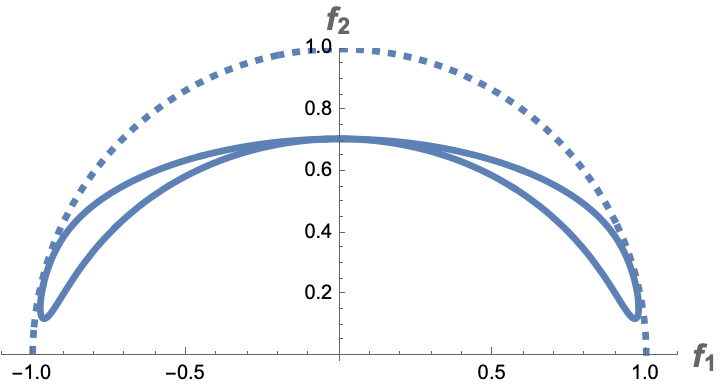} \hskip.2cm \includegraphics[scale=0.42]{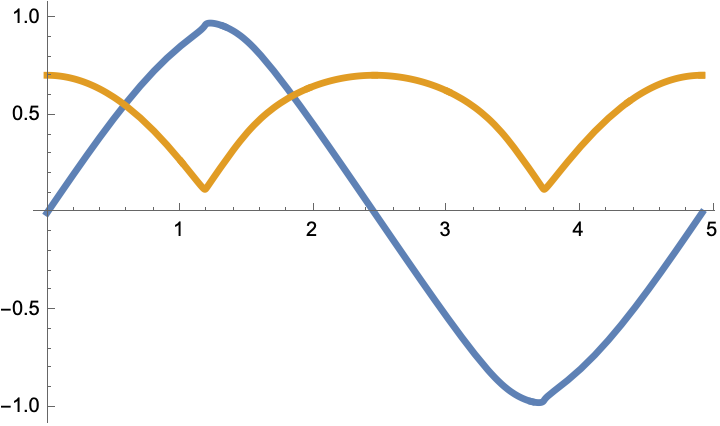}\hskip.2cm \includegraphics[scale=0.42]{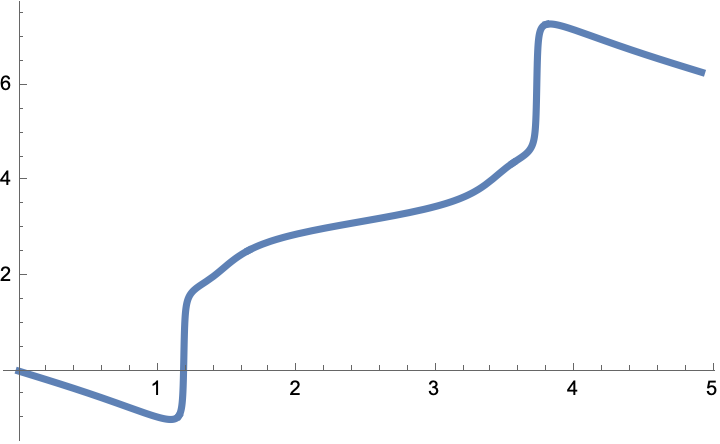}}
\caption{Graph of the profile curve associated with the solution $Z_5=(a,H,T)=(0.707096, -0.0899734, 2.45894)$. This is the first nonembedded CMC hypesurface in $\bfS{4}$ in the family} 
\label{fig:Z5S4}
\end{figure}

%%Z6
 \subsection{The minimal example}  the following  point provides the only minimal example in this family
 
 $$Z_6=(a,H,T)=(0.73801,0.,2.51519)$$ 
 
 solves the system of equations  \eqref{theaHTsytem}. Figure \ref{fig:Z6S4} shows the graphs of the profile curve  and the functions $f_1,f_2$ and $\theta$ associated with the parameters in $Z_6$. We can see that this time we have nonembedded minimal hypersurface.
 
 Had this example been embedded, it would have constituted a counterexample to the Carlotto-Schulz conjecture, which states that the hypertorus is the only embedding of
  $S^1\times S^1\times S^1$ in $\bfS{4}$ \cite{CS}.

 \begin{figure}[h]
\centerline{\includegraphics[scale=0.42]{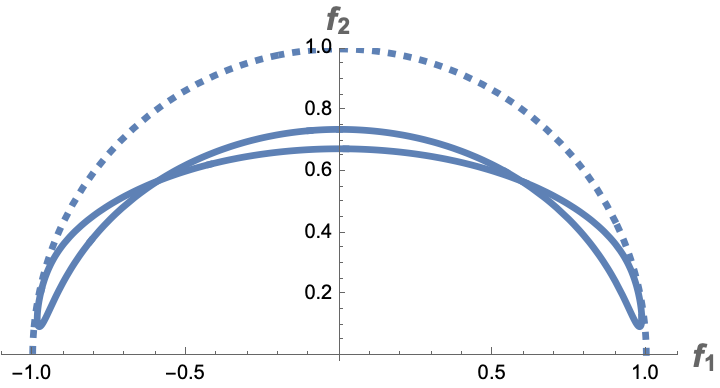} \hskip.2cm \includegraphics[scale=0.42]{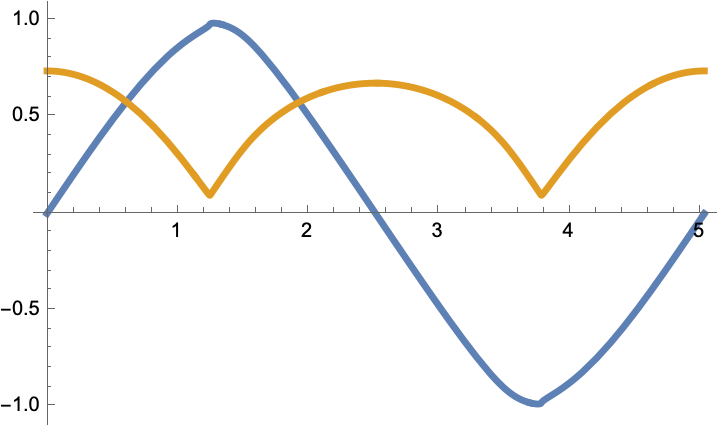}\hskip.2cm \includegraphics[scale=0.42]{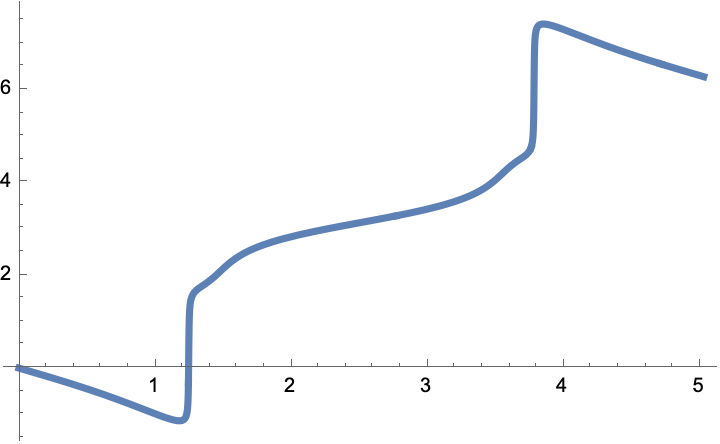}}
\caption{Graph of the profile curve associated with the solution $Z_6=(a,H,T)=(0.73801,0.,2.51519)$. This solution provides an immersed (non embedded) minimal hypersurface in  $\bfS{4}$} 
\label{fig:Z6S4}
\end{figure}

%%Z7
 \subsection{$H$ now is positive and continues increasing} After reaching the value $H=0$, the value of the CMC continue increasing. The piont
 $$Z_7=(a,H,T)=({0.745402, 0.0299556, 2.54038})$$ 
 
 solves the system of equations  \eqref{theaHTsytem}. Figure \ref{fig:Z7S4} shows the graphs of the profile curve  and the functions $f_1,f_2$ and $\theta$ associated with the parameters in $Z_7$.
  
 \begin{figure}[h]
\centerline{\includegraphics[scale=0.42]{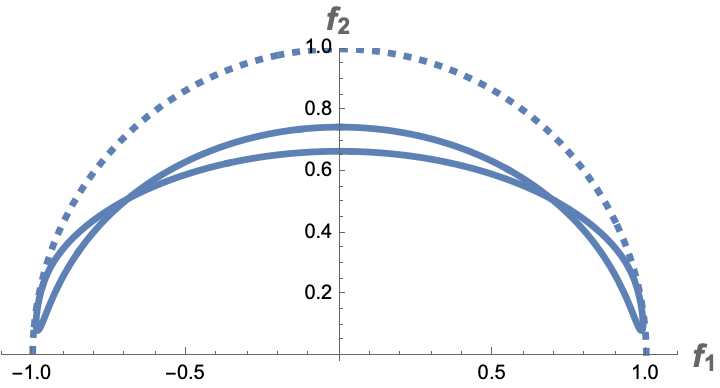} \hskip.2cm \includegraphics[scale=0.42]{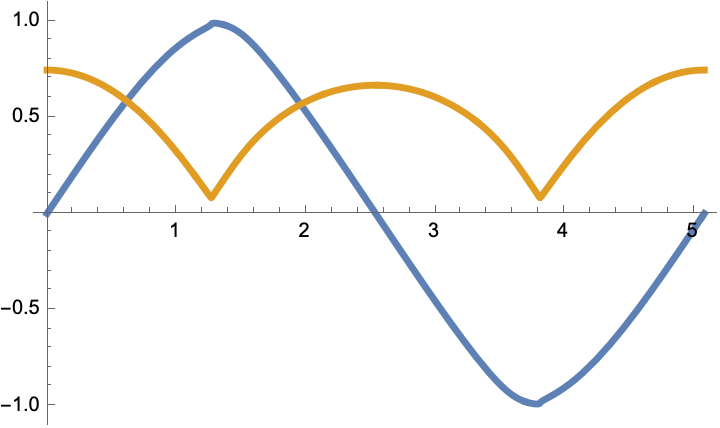}\hskip.2cm \includegraphics[scale=0.42]{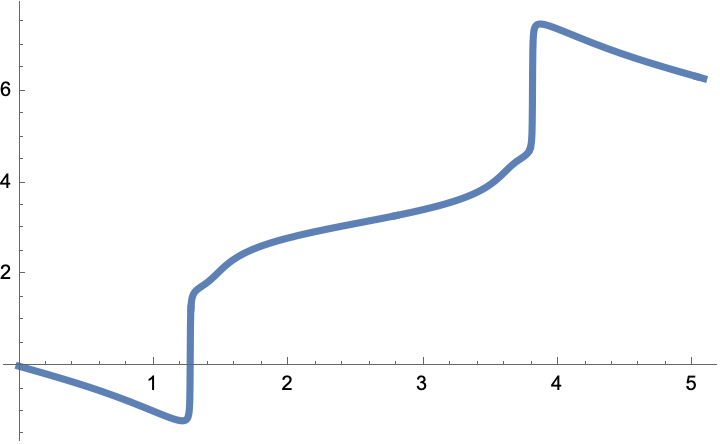}}
\caption{Graph of the profile curve associated with the solution $Z_7=(a,H,T)=({0.745402, 0.0299556, 2.54038})$. This solution provides an embedded CMC with curvature $\bfS{4}$} 
\label{fig:Z7S4}
\end{figure}

%%Z8
 \subsection{The maximum value of H}  The piont
 $$Z_8=(a,H,T)=({0.743855, 0.0565645, 2.5915})$$ 
 
 solves the system of equations  \eqref{theaHTsytem} and provide the example in the family with most positive mean curvature. Figure \ref{fig:Z8S4} shows the graphs of the profile curve  and the functions $f_1,f_2$ and $\theta$ associated with the parameters in $Z_3$. 
  
 \begin{figure}[h]
\centerline{\includegraphics[scale=0.42]{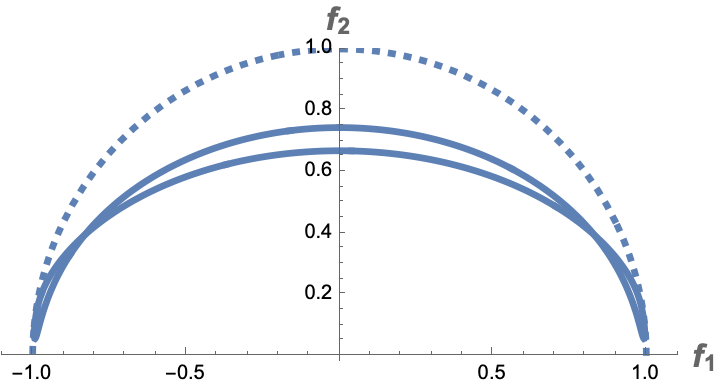} \hskip.2cm \includegraphics[scale=0.42]{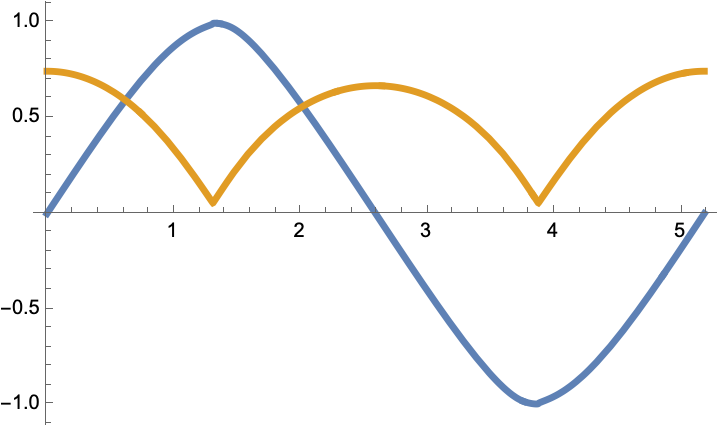}\hskip.2cm \includegraphics[scale=0.42]{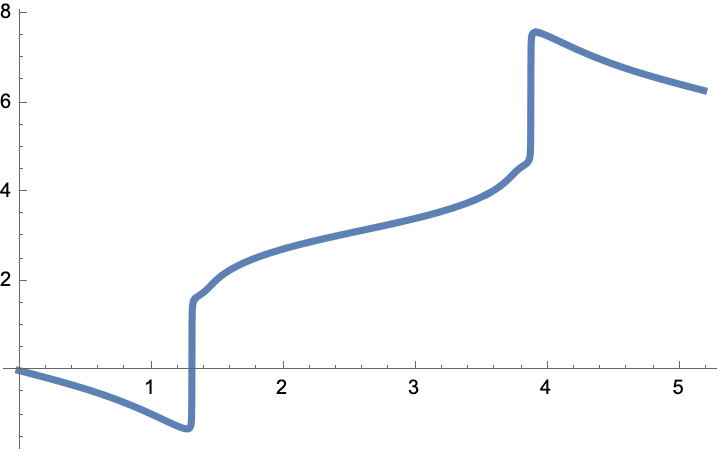}}
\caption{Graph of the profile curve associated with the solution $Z_8=(a,H,T)=({0.743855, 0.0565645, 2.5915})$. This solution provides a CMC with curvature $\bfS{4}$} 
\label{fig:Z8S4}
\end{figure}
%%Z9
 \subsection{Now $H$ starts decreasing to zero} The point
 
 $$Z_9=(a,H,T)=(0.720997, 0.0299491, 2.64565)$$ 
 
 solves the system of equations  \eqref{theaHTsytem}. Figure \ref{fig:Z9S4} shows the graphs of the profile curve  and the functions $f_1,f_2$ and $\theta$ associated with the parameters in $Z_9$. We can see that the profile curve starts looking like half of the ellipse $E$ defined in Remark \ref{rem:singsolellipse}. 
 \begin{figure}[h]
\centerline{\includegraphics[scale=0.42]{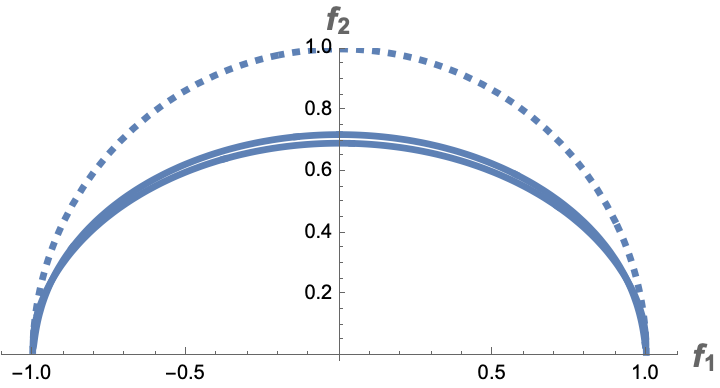} \hskip.2cm \includegraphics[scale=0.42]{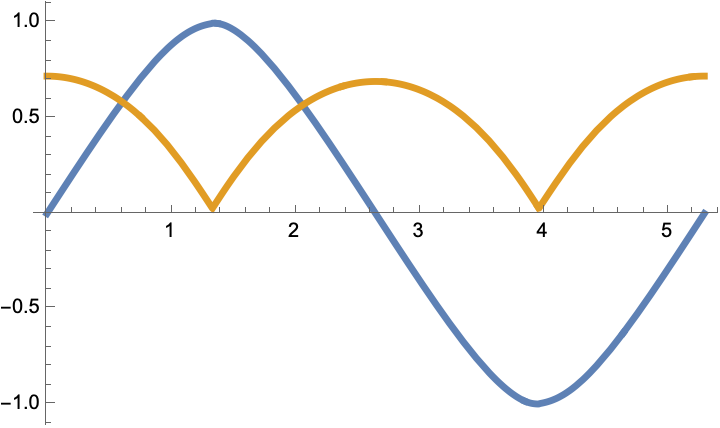}\hskip.2cm \includegraphics[scale=0.42]{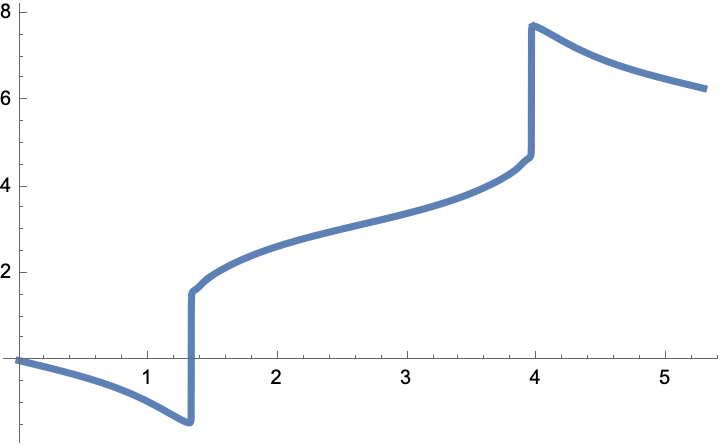}}
\caption{Graph of the profile curve associated with the solution $Z_9=(a,H,T)=(0.720997, 0.0299491, 2.64565)$. This solution provides a CMC with curvature $\bfS{4}$} 
\label{fig:Z9S4}
\end{figure}

%%Z10
 \subsection{Near the end} The point
 
 $$Z_{10}=(a,H,T)=(0.703734, 0.000563715, 2.70305)$$ 
 
 solves the system of equations  \eqref{theaHTsytem}. As we can see, this point is very close to the point $Z_f$ defined in  Equation \eqref{pZf}. Figure \ref{fig:Z10S4} shows the graphs of the profile curve  and the functions $f_1,f_2$ and $\theta$ associated with the parameters in $Z_{10}$.  For the naked eye, if we superpose the graphs of the solutions of $f_1$ and $f_2$ in Figures \ref{fig:gfMf} and \ref{fig:Z10S4}  we could not tell the difference between these to functions, nevertheless  the functions $f_1$ and $f_2$ and $\theta$ in Figures \ref{fig:Z1S4} are $C^\infty$. 

 \begin{figure}[h]
\centerline{\includegraphics[scale=0.42]{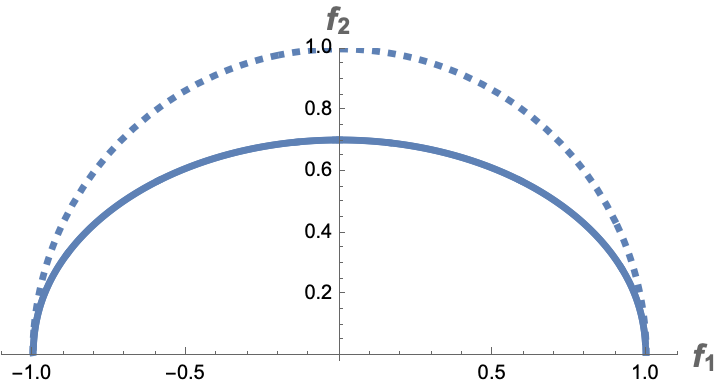} \hskip.2cm \includegraphics[scale=0.42]{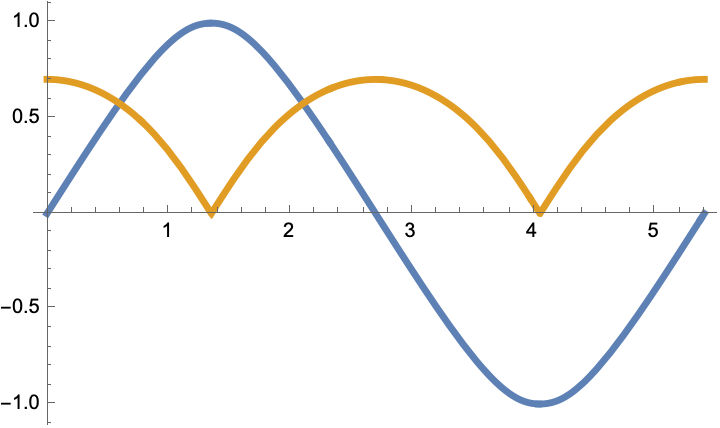}\hskip.2cm \includegraphics[scale=0.42]{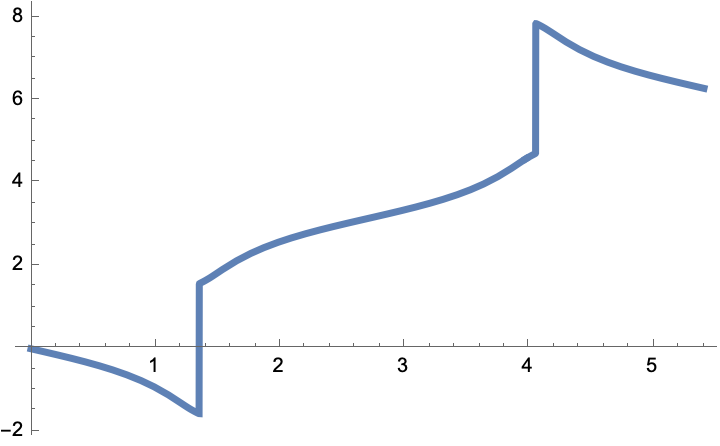}}
\caption{Graph of the profile curve associated with the solution $Z_{10}=(a,H,T)=(0.703734, 0.000563715, 2.70305)$. This hypersurface is very close to minimal hypersurface  $M_f\subset \bfS{4}$ defined in Equation \eqref{mMf}. } 
\label{fig:Z10S4}
\end{figure}

%REFERENCES
%


\begin{thebibliography}{11}

\bibitem{BL} Brito, F. and Leite, M. {\em A remark on rotational hypersurfaces of $S^n$}, Bull. Soc. Math. Helv.. {\bf 71} pp. 60-69  (1990).

\bibitem{CS} Carlotto, A., and Schulz, M. {\em Minimal Hypertori in the Four-Dimensional Sphere}, Ars Inveniendi Analytica, {\bf 8}, 33 pages (2023).


\bibitem{DD} Do Carmo, M and Dajczer, M. {\em Rotational hypersurfaces in spaces of constant curvature}, Trans. Amer. Math. Soc. {\bf 227} pp. 685-709  (1983).

\bibitem{LW} Lai, J. and Wei, G. {\em Embedded Constant Mean Curvature hypertori in the $2m$-sphere}, Personal communication.

\bibitem{MN} Marques, F. and Neves, A. {\em  Min-Max theory and the Willmore conjecture}, Annals of Math.  {\bf 179}  pp 683-782 (2014).

\bibitem{HW} Huang, C. and Wei, G. {\em New examples of constant mean curvature hypersurfaces in the sphere}, ArXiv:2209.13236v1 (2022)

\bibitem{O1} Otsuki T.  {\em Minimal hypersurfaces in a Riemannian manifold of constant curvature},  American Journal of Math.  {\bf 92}, no 1,  pp 145-173 (1970).

\bibitem{O2} Otsuki T.  {\em On integral inequality related with a certain nonlinear differential equation}, Proc. Japan Acad.  {\bf 48},   pp 9-12 (1972)







\bibitem{P} Perdomo, O. {\em Spectrum of the Laplacian and the Jacobi operator on Generalized rotational minimal hypersurfaces of spheres}, ArXiv:2403.04223 (7 March 2024).

\bibitem{Pa} Perdomo, O. {\em Navigating the space of CMC hypersurfaces in spheres, part I}, arXiv:2503.13823 (18 March 2025).

\bibitem{Pnm} Perdomo, O. {\em The round Taylor method}, Amer. Math. Monthly 126 (2019), no. 3, pp 237-251.

\bibitem{P3bp} Perdomo, O. {\em  A small variation of the Taylor Method and periodic solutions of the 3-body problem}  ArXiv
1507.01100 (4 Jul 2015)

\bibitem{PRe} Perdomo, O. {\em Embedded constant mean curvature hypersurfaces on spheres} Asian J. Math. {\bf 14},  (2010), no 1, pp 73-108.

\bibitem{WCL} Wei, G, Cheng, Q-M, Li, H. {\em Embedded hypersurfaces with constant $m^{th}$ mean curvature in a unit sphere.}, Commun. Contemp. Math. {\bf 12}  pp. 997-1013 (2010).

\bibitem{Y} S.-T. Yau, {\it Chern-A great geometer of the twentieth century}, International Press Co. Ltd. Hong Kong, 1992.



 \end{thebibliography}
\end{document}